\documentclass[11pt,a4paper]{amsart}

\usepackage[T1]{fontenc}
\usepackage{amsmath,amssymb,amsthm}
\usepackage{booktabs}
\usepackage{geometry}
\usepackage[
  hidelinks,
  pdftitle={An Explicit Counterexample to Stanley’s Rankwise Lower-Bound Conjecture for Differential Posets},
  pdfauthor={Xinan Dai, Wenhao Deng, Yingdong Shi, Tailin Wu, and Yuchen Yang},
  pdfsubject={Differential posets and extremal rank sizes},
  pdfkeywords={differential poset, Young's lattice, rank function, extremal poset, incidence trade}
]{hyperref}

\newtheorem{theorem}{Theorem}[section]
\newtheorem{lemma}[theorem]{Lemma}
\newtheorem{proposition}[theorem]{Proposition}
\theoremstyle{definition}
\newtheorem{definition}[theorem]{Definition}
\theoremstyle{remark}
\newtheorem{remark}[theorem]{Remark}

\newcommand{\card}[1]{\lvert #1\rvert}
\newcommand{\Rcal}{\mathcal{R}}
\newcommand{\Acal}{\mathcal{A}}

\title[Stanley's Differential-Poset Rank-Minimum Problem]
{An Explicit Counterexample to Stanley's Rankwise\\ 
Lower-Bound Conjecture for Differential Posets}

\author{Xinan Dai}
\thanks{Author note: Xinan Dai received his B.S. degree in Mathematics
and Applied Mathematics from Shanghai University. He is currently
enrolled as a Ph.D. student at Fudan University and is a visiting
student at the AI for Scientific Simulation and Discovery Lab,
Westlake University.}

\address{
Key Laboratory for Information Science of Electromagnetic Waves,
College of Future Information and Technology,
Fudan University, Shanghai, China
}

\curraddr{Department of Artificial Intelligence,
School of Engineering, Westlake University, Hangzhou, China}
\email{xndai23@m.fudan.edu.cn}

\author{Wenhao Deng}
\thanks{Wenhao Deng is a student at the University of Glasgow and is currently an intern at the AI for Scientific Simulation and Discovery Lab, Westlake University.}
\address{University of Glasgow, Glasgow, United Kingdom}
\curraddr{Department of Artificial Intelligence,
School of Engineering, Westlake University, Hangzhou, China}
\email{dengwenhao@westlake.edu.cn}

\author{Yingdong Shi}
\address{School of Information Science and Technology, ShanghaiTech University, Shanghai, China}
\email{shiyd2023@shanghaitech.edu.cn}

\author{Tailin Wu}
\address{Department of Artificial Intelligence,
School of Engineering, Westlake University, Hangzhou, China}
\email{wutailin@westlake.edu.cn}

\author{Yuchen Yang}
\address{Department of Artificial Intelligence,
School of Engineering, Westlake University, Hangzhou, China}
\email{yangyuchen@westlake.edu.cn}

\date{24 July 2026}

\subjclass[2020]{Primary 06A07; Secondary 05B30}
\keywords{Differential poset, Young's lattice, rank function, extremal poset, incidence trade}

\begin{document}

\begin{abstract}
In Problem~6 of his 1988 paper on differential posets, Stanley asked for the least possible cardinality of a fixed rank of an $r$-differential poset and suggested that the minimum should be attained by $Y^r$, the $r$-fold Cartesian power of Young's lattice. We disprove the resulting universal coefficientwise lower bound. For every $r\geq 3$, we construct an infinite $r$-differential poset $P^{(r)}$ satisfying
\[
  \card{P^{(r)}_4}
  =\card{(Y^r)_4}-\left\lfloor\frac r3\right\rfloor.
\]
For $r=3$, the construction replaces thirteen rank-four lower-cover blocks of $Y^3$ by twelve blocks with the same point and pair incidence multiplicities, producing the initial rank sequence $1,3,9,22,50$ instead of $1,3,9,22,51$. A reflection extension then yields an infinite differential poset. The construction does not address the cases $r=1$ and $r=2$.
\end{abstract}

\maketitle

\section*{Introduction}

Stanley introduced differential posets as an axiomatic framework retaining many local and enumerative features of Young's lattice~\cite{Stanley1988}. In Problem~6 of his original paper, he asked for the greatest and least possible number of elements of rank $n$ in an $r$-differential poset and suggested that $Y^r$ should be the rank-minimizing example. Byrnes subsequently settled the maximum side of the problem~\cite{Byrnes2012}. Stanley and Zanello later restated the proposed lower bound explicitly~\cite[Question~17]{StanleyZanello2012}: must every $r$-differential poset $P$ satisfy
\begin{equation}\label{eq:stanley-bound}
  \card{P_n}\geq \card{(Y^r)_n}
  \qquad(n\geq0)?
\end{equation}

We show that the universally quantified assertion in \eqref{eq:stanley-bound} is false. The basic counterexample occurs at $r=3$ and rank four. Its finite mechanism is an unequal replacement of lower-cover blocks: thirteen blocks in $Y^3$ are replaced by twelve new blocks while preserving every one-point and two-point incidence multiplicity. These multiplicities are precisely the entries of the operator $D_4U_3$. Consequently, the differential-poset relation remains valid through rank three, although the fourth rank has one fewer element. Stanley's reflection construction then extends the finite initial segment to an infinite differential poset.

Section~\ref{sec:preliminaries} recalls the relevant definitions and states the main result. Section~\ref{sec:r3-construction} gives the rank-four replacement for $r=3$ and verifies it directly. Section~\ref{sec:extension} gives the infinite extension, and Section~\ref{sec:general-r} transports the construction to every $r\geq3$.

\section{Differential posets and the main result}\label{sec:preliminaries}

Young's lattice $Y$ is the graded poset of integer partitions ordered by inclusion of Ferrers diagrams. Its rank-$n$ elements are the partitions of $n$, and a cover relation adds one cell. Write $p(n)=\card{Y_n}$ for the number of integer partitions of $n$. Hence
\[
  F_Y(q)=\sum_{n\geq0}\card{Y_n}q^n
  =\prod_{i\geq1}(1-q^i)^{-1}.
\]
The Cartesian product of $r$ copies is denoted by $Y^r$. An element of $(Y^r)_n$ is an $r$-tuple of partitions whose total size is $n$, and
\[
  F_{Y^r}(q)=F_Y(q)^r.
\]

\begin{definition}\label{def:differential-poset}
Let $r$ be a positive integer. An \emph{$r$-differential poset} is a locally finite graded poset
\[
  P=\bigsqcup_{n\geq0}P_n
\]
with a unique least element and the following properties.
\begin{enumerate}
  \item[\textup{(D1)}] If $x\neq y$ have the same rank, then the number of their common upper covers equals the number of their common lower covers.
  \item[\textup{(D2)}] If $x$ has $d$ lower covers, then it has $d+r$ upper covers.
\end{enumerate}
\end{definition}

Let $\mathbb{Q}P_n$ be the vector space with basis $P_n$. Define the up and down operators by
\[
  U_nx=\sum_{\substack{z\in P_{n+1}\\z\gtrdot x}}z,
  \qquad
  D_nx=\sum_{\substack{w\in P_{n-1}\\x\gtrdot w}}w.
\]
The two axioms are equivalent to
\begin{equation}\label{eq:differential-identity}
  D_{n+1}U_n-U_{n-1}D_n=rI_{\mathbb{Q}P_n}
  \qquad(n\geq0),
\end{equation}
with the evident convention in rank zero. Young's lattice is $1$-differential, and Cartesian products add the parameters; hence $Y^r$ is $r$-differential.

\begin{theorem}\label{thm:main}
For every integer $r\geq3$, there exists an infinite $r$-differential poset $P^{(r)}$ such that
\[
  \card{P^{(r)}_i}=\card{(Y^r)_i}\quad(0\leq i\leq3),
  \qquad
  \card{P^{(r)}_4}=\card{(Y^r)_4}-\left\lfloor\frac r3\right\rfloor.
\]
Moreover,
\begin{equation}\label{eq:Y-r-rank4}
  \card{(Y^r)_4}=\frac{r(r+1)(r^2+17r+42)}{24}.
\end{equation}
In particular, the first five rank sizes of $P^{(3)}$ are
\[
  (1,3,9,22,50),
\]
whereas those of $Y^3$ are $(1,3,9,22,51)$.
\end{theorem}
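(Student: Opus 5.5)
The plan has four parts: a finite surgery at rank four, a reflection extension to an infinite poset, transport to general $r$, and the rank-four count.

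\textbf{Setup at ranks $3$ and $4$.} Suppose $P$ agrees with $Y^r$ through rank three. Then the identity \eqref{eq:differential-identity} holds automatically at ranks $0,1,2$. Rank four enters only through $D_4U_3$ acting on $\mathbb{Q}P_3$. Each element $z\in P_4$ is determined, for this purpose, by its lower-cover block $B_z\subseteq P_3$. The entries of $D_4U_3$ are
\[
  (D_4U_3)_{x,y}=\#\{z: x,y\in B_z\}.
\]
The diagonal entries count the blocks through a point, and the off-diagonal entries count the blocks through a pair. Hence the identity at rank three holds for any multiset of blocks with the same one-point and two-point incidence multiplicities as the blocks of $(Y^r)_4$. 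Such a replacement is a ``2-trade''. If the new multiset has fewer blocks, rank four shrinks and nothing below changes.

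\textbf{Case $r=3$.} Here $P_3$ has $22$ elements, and I would exhibit explicitly $13$ blocks of $Y^3$ and $12$ new blocks with equal point and pair multiplicities. The $13$ blocks would be, for example, those elements of $(Y^3)_4$ lying over a chosen cluster of rank-three elements. Verification is a finite check: compare the two incidence tables. One also needs the resulting structure to be a genuine graded poset. This requires each new block to be nonempty, and no two elements with identical blocks causing trouble. Both are fine, since (D1) and (D2) are encoded entirely by the multiplicities.

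\textbf{Infinite extension.} I would use Stanley's reflection extension. Given a finite $r$-differential-through-rank-$(n-1)$ segment $P_0,\dots,P_n$ whose identity holds below $n$, adjoin $P_{n+1}$ as follows. For each $w\in P_{n-1}$, add an element $w^*$ covering exactly the upper covers of $w$ in $P_n$. For each $x\in P_n$, add $r$ new elements covering only $x$. This gives $D_{n+1}U_n = U_{n-1}D_n + rI$, as in the standard reflection argument. Iterating yields an infinite $r$-differential poset with the prescribed ranks $0$ through $4$.

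\textbf{General $r\ge3$ and the formula.} Write $r=3k+s$ and embed $Y^r=(Y^3)^k\times Y^s$. The natural attempt is to use the product $(P^{(3)})^k\times Y^s$ truncated at rank four. This fails, because product rank sizes convolve and the loss would not be exactly $k$. Instead, perform the $r=3$ trade independently on $k$ disjoint coordinate triples. The trade only concerns elements supported in a single triple of coordinates at rank four, so elements supported on $\{1,2,3\}$, $\{4,5,6\}$, and so on give disjoint blocks. Each trade removes one element, and the trades commute since their blocks are disjoint. Then extend by reflection as before. This yields the loss $\lfloor r/3\rfloor$.

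For \eqref{eq:Y-r-rank4}, expand the coefficient of $q^4$ in $\prod(1-q^i)^{-r}$. It equals
\[
  \binom{r+3}{4}+\binom{r+1}{2}r+\binom{r}{2}+2\binom{r}{1}\cdot\dots
\]
I would compute it directly from partition types of the total degree: $\sum$ over multisets of parts, giving $\binom{r+3}4+r\binom{r+1}2+\binom{r+1}2+r^2+r$. Then I would simplify, checking $r=1$ gives $5$ and $r=3$ gives $51$.

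The main obstacle is finding and verifying the explicit $13\to12$ trade for $r=3$. I would search computationally by linear or integer programming on the pair-incidence table of the blocks, then present the blocks in a table.
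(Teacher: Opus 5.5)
Your architecture matches the paper's: $D_4U_3$ is read off from the point and pair multiplicities of the rank-four lower-cover blocks, the trade is performed on $\lfloor r/3\rfloor$ disjoint coordinate triples, and Stanley's reflection extension completes the construction.

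\textbf{The gap.} The one non-routine ingredient is never supplied. You do not exhibit $13$ rank-four blocks of $Y^3$ and $12$ nonempty subsets of $(Y^3)_3$ with identical multiplicity on every point and every pair. You also give no argument that such families exist. ``I would search by integer programming'' is a plan, not a proof.

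Nothing in your reduction forces such a replacement to exist. It is a $2$-trade whose zeroth moments differ, and whether one fits inside the specific incidence structure of $(Y^3)_{\le 4}$ is precisely the mathematical content of the theorem. The paper settles this with an explicit witness:
\begin{itemize}
\item It deletes the blocks of $(22;0;0)$, $(0;22;0)$, $(0;0;22)$, $(11;2;0)$, $(11;11;0)$, $(11;0;2)$, $(11;0;11)$, $(21;1;0)$, $(21;0;1)$, $(2;1;1)$, $(11;1;1)$, $(1;21;0)$, $(1;0;21)$.
\item It inserts the seven pairs and five triples of $\Acal$ listed in Section~\ref{subsec:replacement}.
\item It checks that the point degrees agree and that both sides contain the same twenty-two pairs exactly once each.
\end{itemize}
Your suggested shape, ``all blocks lying over a cluster,'' is not what happens there. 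For example, $(2;2;0)$ has block $\{(2;1;0),(1;2;0)\}$ inside the paper's cluster but is not deleted.

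\textbf{Two smaller points.} First, your dismissal of the product $(P^{(3)})^k\times Y^s$ is wrong. Since $F_{P^{(3)}}(q)=F_{Y^3}(q)-q^4+O(q^5)$, one gets $F_{P^{(3)}}(q)^kF_Y(q)^s=F_{Y^r}(q)-kq^4+O(q^5)$. So the product, which is $r$-differential because parameters add, loses exactly $k$ at rank four and is already infinite. Your disjoint-triples route (the paper's) also works, so the argument is unaffected, but the stated reason is false.

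Second, for the rank count, drop the garbled first line. Your second expression
\[
\binom{r+3}{4}+r\binom{r+1}{2}+\binom{r+1}{2}+r^2+r
\]
comes from expanding $\prod_i(1-q^i)^{-r}$ by the part types $1^4,\ 2\,1^2,\ 2^2,\ 3\,1,\ 4$. It is correct and equals $\frac{r(r+1)}{24}\bigl((r+2)(r+3)+12r+36\bigr)=\frac{r(r+1)(r^2+17r+42)}{24}$. The paper instead counts by how the four cells are distributed among the coordinates; both computations are fine.
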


The rank-growth results of Miller~\cite{Miller2013} and Gaetz--Venkataramana~\cite{GaetzVenkataramana2020} do not imply the coefficientwise comparison in \eqref{eq:stanley-bound} and are consistent with Theorem~\ref{thm:main}.

\section{The finite \texorpdfstring{$r=3$}{r=3} construction}\label{sec:r3-construction}

\subsection{Lower-cover blocks}

Retain ranks zero through three of $Y^3$. For a proposed rank-four element $z$, set
\[
  B_z=\{x\in(Y^3)_3:x\lessdot z\}.
\]
We call $B_z$ the \emph{lower-cover block} of $z$.

\begin{lemma}[Point-and-pair criterion]\label{lem:point-pair}
Fix the poset through rank three. Let $(B_z)_{z\in Q}$ be a finite indexed family of nonempty subsets of $P_3$, one for each proposed rank-four element $z\in Q$. For $x,y\in P_3$, the $(x,y)$-entry of $D_4U_3$ is
\[
  \begin{cases}
    \card{\{z\in Q:x\in B_z\}},&x=y,\\[2mm]
    \card{\{z\in Q:\{x,y\}\subseteq B_z\}},&x\neq y.
  \end{cases}
\]
Consequently, replacing rank-four blocks by blocks having the same incidence multiplicity on every one- and two-element subset of $P_3$ leaves $D_4U_3$ unchanged.
\end{lemma}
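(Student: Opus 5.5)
The plan is to compute $D_4U_3$ directly from the definitions of the up and down operators and read off its matrix entries. For $x\in P_3$, the definition of $U_3$ gives
\[
  U_3x=\sum_{\substack{z\in Q\\ x\in B_z}} z ,
\]
since $z\gtrdot x$ holds exactly when $x\in B_z$. Here I use that the rank-four elements are indexed by $Q$ and that each cover relation between ranks three and four is recorded by membership in a block. Because $(B_z)$ is an indexed family, two distinct $z$ with equal blocks are counted separately. Applying $D_4$ and using linearity,
\[
  D_4U_3x=\sum_{\substack{z\in Q\\ x\in B_z}}\ \sum_{y\in B_z} y .
\]

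Next I extract the coefficient of a basis vector $y\in P_3$. The coefficient is the number of pairs $(z,y)$ with $x\in B_z$ and $y\in B_z$, that is, the number of $z\in Q$ with $\{x,y\}\subseteq B_z$. The two cases of the statement then fall out. When $x=y$, the set $\{x,y\}$ is just $\{x\}$, so the count is $\card{\{z\in Q:x\in B_z\}}$. When $x\neq y$, the count is $\card{\{z\in Q:\{x,y\}\subseteq B_z\}}$. With the convention that the $(x,y)$-entry means the coefficient of $y$ in $D_4U_3x$, the matrix is symmetric, so the ordering convention does not matter.

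For the consequence, I note that the formula shows every entry of $D_4U_3$ depends only on the function $S\mapsto\card{\{z\in Q:S\subseteq B_z\}}$ on subsets $S\subseteq P_3$ with $\card{S}\in\{1,2\}$. Suppose a family $(B'_{z'})_{z'\in Q'}$ has the same values of this function. Then it yields the same matrix, even if $\card{Q'}\neq\card{Q}$. The operator $U_2D_3$ on $\mathbb{Q}P_3$ is unchanged as well, because ranks zero through three are fixed.

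There is no real obstacle here. The only care needed is to treat the blocks as a multiset indexed by $Q$, so that repeated blocks contribute with multiplicity, and to observe that nonemptiness of the blocks plays no role in the computation.
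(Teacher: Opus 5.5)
Your proposal is correct and takes the same approach as the paper: expand $D_4U_3x$ from the definitions and read off the coefficient of $y$ as the number of rank-four elements covering both $x$ and $y$, which for $x=y$ is the number of upper covers of $x$. Your explicit double sum and your remark that the blocks form an indexed multiset (so repeated blocks count with multiplicity) spell out details the paper leaves implicit.
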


\begin{proof}
For $x=y$, the coefficient of $x$ in $D_4U_3x$ is the number of rank-four elements covering $x$. For $x\neq y$, the coefficient of $y$ is the number of rank-four elements covering both $x$ and $y$.
\end{proof}

Because the ranks below four are unchanged, $U_2D_3$ is unchanged as well. It therefore suffices to preserve the point and pair incidence multiplicities of the rank-four lower-cover blocks.

\subsection{Indexing rank three}

Write a triple of partitions as $(\lambda;\mu;\nu)$, write $0$ for the empty partition, and concatenate the parts of a partition; for example, $21$ denotes $(2,1)$. We label the elements of $(Y^3)_3$ as follows.

\begin{center}
\renewcommand{\arraystretch}{1.08}
\begin{tabular}{@{}r l r l@{}}
\toprule
Index & Element & Index & Element\\
\midrule
0  & $(3;0;0)$   & 11 & $(1;0;11)$\\
1  & $(21;0;0)$  & 12 & $(0;3;0)$\\
2  & $(111;0;0)$ & 13 & $(0;21;0)$\\
3  & $(2;1;0)$   & 14 & $(0;111;0)$\\
4  & $(2;0;1)$   & 15 & $(0;2;1)$\\
5  & $(11;1;0)$  & 16 & $(0;11;1)$\\
6  & $(11;0;1)$  & 17 & $(0;1;2)$\\
7  & $(1;2;0)$   & 18 & $(0;1;11)$\\
8  & $(1;11;0)$  & 19 & $(0;0;3)$\\
9  & $(1;1;1)$   & 20 & $(0;0;21)$\\
10 & $(1;0;2)$   & 21 & $(0;0;111)$\\
\bottomrule
\end{tabular}
\end{center}

\subsection{The thirteen-for-twelve replacement}\label{subsec:replacement}

Begin with the $51$ lower-cover blocks supplied by rank four of $Y^3$. Delete
\begin{equation}\label{eq:R-blocks}
\begin{aligned}
\Rcal=\{&\{1\},\{13\},\{20\},\{5,7\},\{5,8\},\{6,10\},\{6,11\},\\
&\{1,3,5\},\{1,4,6\},\{3,4,9\},\{5,6,9\},\{7,8,13\},\{10,11,20\}\}.
\end{aligned}
\end{equation}
In the displayed order, these are the lower-cover blocks of
\begin{equation}\label{eq:R-elements}
\begin{aligned}
 &(22;0;0),\ (0;22;0),\ (0;0;22),\ (11;2;0),\ (11;11;0),\ (11;0;2),\ (11;0;11),\\
 &(21;1;0),\ (21;0;1),\ (2;1;1),\ (11;1;1),\ (1;21;0),\ (1;0;21).
\end{aligned}
\end{equation}
The list in \eqref{eq:R-elements} is checked directly by deleting one removable corner cell from one of the three partition coordinates.
Insert twelve new rank-four elements with lower-cover blocks
\begin{equation}\label{eq:A-blocks}
\begin{aligned}
\Acal=\{&\{1,5\},\{1,6\},\{5,6\},\{7,13\},\{8,13\},\{10,20\},\{11,20\},\\
&\{1,3,4\},\{3,5,9\},\{4,6,9\},\{5,7,8\},\{6,10,11\}\}.
\end{aligned}
\end{equation}
Every block in \eqref{eq:A-blocks} is nonempty and therefore defines a rank-four element once the indicated cover relations are declared.

\subsection{Verification}

Only the twelve vertices
\[
  1,3,4,5,6,7,8,9,10,11,13,20
\]
occur in $\Rcal\cup\Acal$. Their incidence multiplicities agree:
\begin{equation}\label{eq:point-incidences}
\begin{array}{c|rrrrrrrrrrrr}
 v&1&3&4&5&6&7&8&9&10&11&13&20\\
\hline
 \deg_{\Rcal}(v)=\deg_{\Acal}(v)&3&2&2&4&4&2&2&2&2&2&2&2.
\end{array}
\end{equation}
Both sides contain each of the following twenty-two pairs exactly once:
\begin{equation}\label{eq:pair-incidences}
\begin{aligned}
&\{1,3\},\{1,4\},\{1,5\},\{1,6\},\{3,4\},\{3,5\},\{3,9\},\{4,6\},\{4,9\},\{5,6\},\\
&\{5,7\},\{5,8\},\{5,9\},\{6,9\},\{6,10\},\{6,11\},\{7,8\},\{7,13\},\{8,13\},\\
&\{10,11\},\{10,20\},\{11,20\},
\end{aligned}
\end{equation}
and neither side contains any other pair. Equations~\eqref{eq:point-incidences} and~\eqref{eq:pair-incidences} are therefore a complete incidence check.

Let $P^{(3)}_{\leq4}$ be obtained by leaving $(Y^3)_{\leq3}$ unchanged and replacing $\Rcal$ by $\Acal$ in rank four. By Lemma~\ref{lem:point-pair}, its $D_4U_3$ matrix is the same as that of $Y^3$, while $U_2D_3$ is unchanged. Hence
\[
  D_4U_3-U_2D_3=3I
\]
on rank three. The lower-rank identities are inherited from $Y^3$, so $P^{(3)}_{\leq4}$ is a partial $3$-differential poset and
\[
  \card{P^{(3)}_4}=51-13+12=50.
\]

\begin{remark}\label{rem:trade}
The replacement has a design-theoretic interpretation. Define a signed function on subsets of the twelve affected vertices by
\[
  f(B)=
  \begin{cases}
    +1,&B\in\Rcal,\\
    -1,&B\in\Acal,\\
    0,&\text{otherwise}.
  \end{cases}
\]
Then
\[
  \sum_{B\supseteq S}f(B)=0
  \qquad(1\leq\card{S}\leq2),
\]
whereas $\sum_Bf(B)=1$. After adjoining the empty block to the $\Acal$-side, the two collections form a simple $[2]$-trade of volume $13$ in the sense of~\cite{GhorbaniEtAl2020}. The differential-poset axioms detect the first two incidence moments but not the zeroth, which accounts for the one-element saving.
\end{remark}

\section{Extension to an infinite poset}\label{sec:extension}

The construction above is finite. The following reflection extension, appearing in~\cite[Proposition~6.1]{Stanley1988}, preserves its initial ranks.

\begin{proposition}[Reflection extension]\label{prop:reflection-extension}
Suppose $P_{\leq n}$ is a finite partial $r$-differential poset of rank $n$, meaning that \eqref{eq:differential-identity} holds in ranks below $n$. Then $P_{\leq n}$ extends by one rank to a partial $r$-differential poset of rank $n+1$. Iteration produces an infinite $r$-differential poset whose first $n+1$ ranks, including their cover relations, are exactly $P_{\leq n}$.
\end{proposition}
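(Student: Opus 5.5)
The plan is to use Stanley's reflection construction: for each rank-$n$ element, adjoin one new rank-$(n+1)$ element covering exactly the lower covers of that element, plus $r$ further new elements covering only it. Concretely, given $P_{\le n}$ with \eqref{eq:differential-identity} holding in ranks $0,\dots,n-1$, I define
\[
  P_{n+1}=\{\,x^*: w\in P_{n-1},\ w\lessdot x\,\}\ \sqcup\ \{\,(y,j): y\in P_n,\ 1\le j\le r\,\}.
\]
Here each pair $(w,x)$ with $x\gtrdot w$ yields one element $x_w^*$, and the covers are $x_w^*\gtrdot x$, plus the elements $(y,j)\gtrdot y$. Equivalently, for every rank-$n$ element $y$ and every lower cover $w$ of $y$, I create a new element covering the upper covers of $w$ in $P_n$ minus\dots That is not quite right. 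The cleaner standard version takes $U_n:=D_n^{T}+$ ($r$ private covers), i.e. $U_n$ should be the transpose of $D_n$ plus $r$ extra elements. So the precise definition is: for each cover pair $w\lessdot x$ with $w\in P_{n-1}$, $x\in P_n$, I create one rank-$(n+1)$ element $\widehat{w}$ whose lower covers are exactly the upper covers of $w$ in $P_n$. In other words, there is one new element per $w\in P_{n-1}$, reflecting $w$ across rank $n$. In addition, for each $y\in P_n$ I add $r$ new elements covering $y$ alone.

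Verification. The new $U_n$ has matrix (in the sense of $D_{n+1}$ acting back) giving
\[
  D_{n+1}U_n = U_{n-1}D_n + rI .
\]
Indeed, the reflected elements contribute, for $x,y\in P_n$, the number of $w\in P_{n-1}$ lying below both $x$ and $y$ (with $x=y$ giving the number of lower covers of $x$), which is exactly the $(x,y)$-entry of $U_{n-1}D_n$. The private elements contribute $r$ on the diagonal and nothing off it. This is the point-and-pair computation of Lemma~\ref{lem:point-pair} applied to the new blocks. Hence \eqref{eq:differential-identity} holds at rank $n$. Gradedness and local finiteness are clear, since each new element covers at least one rank-$n$ element: reflected elements cover the nonempty set $U_{n-1}w$, which is nonempty because $r\ge1$ forces every element to have upper covers. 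The lower ranks and their cover relations are untouched, so the identities in ranks below $n$ persist.

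Iterating, I obtain a chain $P_{\le n}\subset P_{\le n+1}\subset\cdots$ of partial $r$-differential posets, each an initial segment of the next. Their union $P$ is locally finite and graded with the least element of $P_{\le n}$. It satisfies \eqref{eq:differential-identity} at every rank, since rank $m$ is checked inside $P_{\le m+1}$. By the stated equivalence, $P$ therefore satisfies (D1) and (D2). The only delicate point is making sure that the definition uses the \emph{old} data: the blocks at rank $n+1$ must depend only on $D_n$ and $P_n$, not on a yet-unconstructed rank. The main obstacle, mild here, is checking the diagonal and off-diagonal entries separately, including the rank-zero convention when $n=0$.
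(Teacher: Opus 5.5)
Your final construction is exactly the paper's (Stanley's) reflection extension: one reflected element $\widehat{w}$ for each $w\in P_{n-1}$, covering precisely the upper covers of $w$, plus $r$ private covers for each $y\in P_n$. Your entrywise check of $D_{n+1}U_n=U_{n-1}D_n+rI$ is the same verification the paper gives, and you add the small extra of deducing $U_{n-1}w\neq\emptyset$ from the identity at rank $n-1$. In a written version, delete the abandoned first attempt: elements indexed by cover pairs and covering only $x$ would violate (D1) whenever two rank-$n$ elements share a lower cover. Also drop the phrase ``for each cover pair'' before $\widehat{w}$, which as written would create duplicate copies of $\widehat{w}$.
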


\begin{proof}
For each $y\in P_{n-1}$, add an element $y^*$ of rank $n+1$ that covers precisely the elements $x\in P_n$ covering $y$. For each $x\in P_n$, also add $r$ rank-$(n+1)$ elements, each covering only $x$.

If $x\in P_n$ has $d$ lower covers, then the elements $y^*$ provide $d$ upper covers and the singleton elements provide $r$ more. Thus $x$ has $d+r$ upper covers. If $x\neq x'$ lie in $P_n$, their common new upper covers are precisely the elements $y^*$ indexed by their common lower covers. The differential conditions therefore hold at rank $n$, and all lower-rank relations are unchanged. Iterating and taking the union gives the required infinite poset.
\end{proof}

Applying Proposition~\ref{prop:reflection-extension} to $P^{(3)}_{\leq4}$ gives an infinite $3$-differential poset with initial rank sequence
\[
  1,3,9,22,50.
\]

\section{The construction for every \texorpdfstring{$r\geq3$}{r >= 3}}\label{sec:general-r}

Fix $r\geq3$ and put $k=\lfloor r/3\rfloor$. Partition the first $3k$ coordinates of $Y^r$ into disjoint triples
\[
  T_j=\{3j+1,3j+2,3j+3\},
  \qquad 0\leq j<k.
\]
For each $T_j$, the elements supported on the coordinates in $T_j$ form an embedded copy of $Y^3$. Perform the replacement $\Rcal\rightsquigarrow\Acal$ inside ranks three and four of each copy.

The affected rank-three vertices belonging to different triples are disjoint. Each replacement preserves all point and pair incidences among its affected vertices. Incidences involving an unaffected vertex are unchanged, and no replacement block mixes two coordinate triples. Thus the identity
\[
  D_4U_3-U_2D_3=rI
\]
remains valid. Each of the $k$ replacements removes one rank-four element, and Proposition~\ref{prop:reflection-extension} extends the resulting partial poset to an infinite $r$-differential poset.

It remains to compute the fourth rank of $Y^r$. Distributing four cells among the $r$ coordinates gives the size patterns
\[
  4,\qquad3+1,\qquad2+2,\qquad2+1+1,\qquad1+1+1+1.
\]
Using $p(1)=1$, $p(2)=2$, $p(3)=3$, and $p(4)=5$, their contributions are
\[
  5r,\qquad3r(r-1),\qquad4\binom r2,\qquad
  2r\binom{r-1}{2},\qquad\binom r4.
\]
Therefore
\begin{align*}
  \card{(Y^r)_4}
  &=5r+3r(r-1)+4\binom r2+2r\binom{r-1}{2}+\binom r4\\
  &=\frac{r(r+1)(r^2+17r+42)}{24}.
\end{align*}
For $r=3$, this is $51$, while the construction gives $50$. This proves Theorem~\ref{thm:main} and disproves \eqref{eq:stanley-bound} in its universally quantified form.

The construction gives
\[
  m_r(4)\leq
  \frac{r(r+1)(r^2+17r+42)}{24}
  -\left\lfloor\frac r3\right\rfloor
  \qquad(r\geq3),
\]
where $m_r(n)$ denotes the minimum possible size of rank $n$ among infinite $r$-differential posets. It neither determines $m_r(4)$ exactly nor addresses $r=1$ or $r=2$.

\section*{Statement on AI-assisted discovery and human verification}
The counterexample presented in this paper was initially generated by the TARS agent system through an autonomous mathematical search and was subsequently examined and independently verified by the human authors.

\end{document}